\definecolor{dblue}{RGB}{6,69,173}
\definecolor{lblue}{RGB}{11,0,128}
\newcommand{\colorlinks}{true}
\newcommand{\linkcolor}{lblue}
\newcommand{\citecolor}{green}
\newcommand{\urlcolor}{dblue}
\newcommand{\linkbordercolor}{red}
\newcommand{\citebordercolor}{green}
\newcommand{\urlbordercolor}{cyan}
\let\oldtocsubsection=\tocsubsection
\let\oldtocsubsubsection=\tocsubsubsection
\renewcommand{\tocsubsection}[2]{\hspace{1em}\oldtocsubsection{#1}{#2}}
\renewcommand{\tocsubsubsection}[2]{\hspace{2em}\oldtocsubsubsection{#1}{#2}}
\newcommand{\hrefHid}[2]{
\hypersetup{urlbordercolor={1 1 1}}%
\hypersetup{urlcolor=black}%
\href{#1}{#2}%
\hypersetup{urlbordercolor=\urlbordercolor}%
\hypersetup{urlcolor=\urlcolor}%
}
\newcommand{\inhref}[2]{\hyperref[#1]{#2}}
\newcommand{\inhrefHid}[2]{%
\hypersetup{linkbordercolor={1 1 1}}%
\hypersetup{linkcolor=black}%
\inhref{#1}{#2}%
\hypersetup{linkbordercolor=\linkbordercolor}%
\hypersetup{linkcolor=\linkcolor}%
}
\newcommand{\defHref}[3]{\newcommand{#1}[1][#3]{\href{#2}{##1}}}
\newcommand{\defInhref}[3]{\newcommand{#1}[1][#3]{\inhref{#2}{##1}}}
\newcommand{\defHrefHid}[3]{\newcommand{#1}[1][#3]{\hrefHid{#2}{##1}}}
\newcommand{\defInhrefHid}[3]{\newcommand{#1}[1][#3]{\inhrefHid{#2}{##1}}}
\newcommand{\defHrefBoth}[3]{%
\expandafter\defHrefHid \csname #3Hid\endcsname {#1}{#2}%
\expandafter\defHref \csname #3Vis\endcsname {#1}{#2}%
}
\newcommand{\defInhrefBoth}[3]{%
  \expandafter\defInhrefHid \csname #3Hid\endcsname {#1}{#2}%
  \expandafter\defInhref \csname #3Vis\endcsname {#1}{#2}%
}
\newcommand{\defHrefBothVis}[3]{%
\defHrefBoth{#1}{#2}{#3}%
\expandafter\defHref \csname #3\endcsname {#1}{#2}%
}
\newcommand{\defInhrefBothVis}[3]{%
  \defInhrefBoth{#1}{#2}{#3}%
  \expandafter\defInhref \csname #3\endcsname {#1}{#2}%
}
\newcommand{\defHrefBothHid}[3]{%
\defHrefBoth{#1}{#2}{#3}%
\expandafter\defHrefHid \csname #3\endcsname {#1}{#2}%
}
\newcommand{\defInhrefBothHid}[3]{%
  \defInhrefBoth{#1}{#2}{#3}%
  \expandafter\defInhrefHid \csname #3\endcsname {#1}{#2}%
}
\newcommand{\et}{{\'{e}tale }}
\newcommand{\term}[2]{%
\label{#2}%
\emph{#1}%
 \globaldefs =1%
\defInhrefBothHid{#2}{#1}{#2}%
 \globaldefs =0%
}
\newcommand{\mailto}[1]{\href{mailto:#1}{\nolinkurl{#1}}}
\let \orgemail=\email
\let \orgurladdr=\urladdr
\renewcommand{\email}[1]{\orgemail{\mailto{#1}}}
\renewcommand{\urladdr}[1]{\orgurladdr{\url{#1}}}
\sloppy \theoremstyle{plain}
\newtheorem{thm}{Theorem}[subsection]
\newtheorem{prop}[thm]{Proposition}
\newtheorem{lem}[thm]{Lemma}
\newtheorem{defn}[thm]{Definition}
\newtheorem{notn}[thm]{Notation}
\newtheorem{cor}[thm]{Corollary}
\newtheorem{rem}[thm]{Remark}
\newtheorem*{theorem*}{Theorem}
\newtheorem*{remark*}{Remark}
\newtheorem{lemma}[thm]{Lemma}
\newtheorem{lemma*}{Lemma}
\newtheorem{proposition}[thm]{Proposition}
\newtheorem{remark}[thm]{Remark}
\newtheorem{rmk}[thm]{Remark}
\newtheorem{theorem}[thm]{Theorem}
\newtheorem{definition}[thm]{Definition}
\newtheorem{notation}[thm]{Notation}
\newtheorem{corollary}[thm]{Corollary}
\newtheorem{conjecture}[thm]{Conjecture}
\newtheorem*{conjecture*}{Conjecture}
\newtheorem{introtheorem}{Theorem}
\newtheorem{romtheorem}{Theorem}
\newtheorem{romconj}[romtheorem]{Conjecture}
\newcommand{\om}
\newcommand{\cB}{\mathcal B}
\newcommand{\cC}{\mathcal C}
\newcommand{\cF}{\mathcal F}
\newcommand{\cH}{\mathcal H}
\newcommand{\cI}{\mathcal I}
\newcommand{\cJ}{\mathcal J}
\newcommand{\cL}{\mathcal L}
\newcommand{\cM}{\mathcal M}
\newcommand{\cO}{\mathcal O}
\newcommand{\cR}{\mathcal R}
\newcommand{\cX}{\mathcal X}
\newcommand{\cZ}{\mathcal Z}
\newcommand{\bC}{\mathbb C}
\newcommand{\bG}{\mathbb G}
\newcommand{\bN}{\mathbb N}
\newcommand{\bP}{\mathbb P}
\newcommand{\bR}{\mathbb R}
\newcommand{\fg}{\mathfrak g}
\newcommand{\fI}{\mathfrak I}
\newcommand{\fR}{\mathfrak R}
\newcommand{\fZ}{\mathfrak Z}
\newcommand{\C}{\mathbb C}
\newcommand{\cc}{\mathbb C}
\newcommand{\bmat}{\left( \begin{matrix}}
\newcommand{\emat}{\end{matrix}\right)}
\newcommand{\lbl}[1]{\label{#1}}
\newcommand{\Rami}[1]{{{#1}}}
\newcommand{\RamiA}[1]{{{#1}}}
\newcommand{\RamiB}[1]{{{#1}}}
\newcommand{\RamiC}[1]{{{#1}}}
\newcommand{\RamiD}[1]{{{#1}}}
\newcommand{\RamiE}[1]{{{#1}}}
\newcommand{\RamiF}[1]{{{#1}}}
\newcommand{\EitanC}[1]{{{#1}}}
\newcommand{\NextVer}[1]{}
\DeclareMathOperator{\SL}{SL}
\DeclareMathOperator{\GL}{GL}
\DeclareMathOperator{\Hom}{Hom}
\DeclareMathOperator{\End}{End}
\DeclareMathOperator{\Spec}{Spec}
\DeclareMathOperator{\spec}{Spec}
\newcommand{\Z}{\mathbb{Z}}
\newcommand{\Sc}{{\mathcal S}}
\newcommand{\Span}{{\operatorname{Span}}}
\renewcommand{\dim}{{\operatorname{dim}}}
\newcommand{\gr}{{\operatorname{gr}}}
\newcommand{\Ind}{\operatorname{Ind}}
\newcommand{\Supp}{{\operatorname{Supp}}}
\renewcommand{\Hom}{{\operatorname{Hom}}}
\newcommand{\CH}{\mathfrak{X}}
\newcommand{\MinPar}{B}
\newcommand{\MinLev}{T}
\renewcommand\p@enumii{}
\newcommand{\irr}{irreducible}
\newcommand{\rep}{representation}
\begin{document}

\author{Avraham Aizenbud}
\address{Avraham Aizenbud,
 Faculty of Mathematics and Computer Science, The Weizmann Institute of Science, ISRAEL.%
}
\email{aizner@gmail.com}
\urladdr{http://www.wisdom.weizmann.ac.il/~aizenr/}
\author{Eitan Sayag}
\address{Eitan Sayag, Department of Mathematics, Ben-Gurion University of the Negev, ISRAEL}
\email{sayage@math.bgu.ac.il}
\urladdr{http://www.math.bgu.ac.il/~sayage/}
\title{A short proof of Hironaka's Theorem on freeness of some Hecke modules}
\date{\today}

\maketitle

\begin{abstract}
\RamiD{Let $E/F$ be an unramified extension of non-archimedean local fields of \Rami{residual} characteristic \Rami{different than $2$}. 
 }

We provide a simple \RamiD{geometric} proof of a \RamiD{variation of a} result of Hironaka (\cite{Hir}). \Rami{Namely we prove} that the module $\Sc(X)^{K_0}$ \RamiD{is free} over \RamiD{the Hecke algebra} \RamiE{$\cH(SL_{n}(E),SL_{n}(O_E))$, where $X$ is the space of \RamiD{unimodular} Hermitian forms on \RamiE{$E^n$}  and $O_E$ is the ring of integers in $E$}.
\end{abstract}

 \tableofcontents

\section{Introduction}
Let $F$ be a non-archimedean local field and let $G$ be a reductive $F$-group. Suppose \RamiE{that} $X$ is an algebraic variety equipped with a $G$-action.
Harmonic analysis on the $G(F)$-space $X(F)$, aims to study and decompose certain spaces of functions \RamiE{on  $X(F)$} into simpler representations of $G(F).$
%

\RamiD{A possible approach to this} problem is to consider the structure of the $\cH(G,K)-$ module $\RamiD{\Sc}(X)^{K}$ of $K$-invariant \RamiE{compactly supported} functions on $X$, where $K$ is a compact open subgroup of $G(F)$ and $\cH(G,K)$ is the Hecke algebra of $G(F)$ with respect to the subgroup $K.$

In the special case where \RamiD{$K=K_0$} is a maximal compact subgroup of $G$, \RamiE{the algebra $\cH(G,K)$ is, \RamiD{by Satake's theorem,  a finitely generated polynomial algebra}. Thus, it is natural to study the structure of the module $\Sc(X)^{K_0}$ over this algebra using the language of commutative algebra.} 
It turns out that in many cases, this \RamiE{module is  free,} a result with applications to multiplicities (see \cite{Sa}).
\RamiD{M}any special cases where studied (\cite{Off}, \cite{Hir}, \cite{Mao-Rallis}) and general results are obtained in \cite{Sa} and \cite{SaSph}.




\RamiD{In this paper we prove the following result.}
\RamiE{
\begin{introtheorem}\label{thm:Intro.Hir}
Let $E/F$ be an unramified quadratic extension of local non-archimedean fields of residual characteristic different than $2$. Let $G=SL_{n}(E)$ and $X$ be the space of Hermitian forms on $E^n$ with determinant $1$.  Let $K_0$ be a maximal compact subgroup. Then $\Sc(\Rami{X})^{K_0}$ is a free $\cH(G,K_0)$ module \RamiD{of rank $2^{\dim(V)-1}.$}
\end{introtheorem}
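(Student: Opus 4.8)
The plan is to translate the statement into commutative algebra over the Hecke algebra $\cH:=\cH(G,K_0)$, which by Satake's theorem (recalled above) is a polynomial $\bC$-algebra, hence a regular integral domain. Writing $M:=\Sc(X)^{K_0}$, it suffices to establish three things: (a) $M$ is finitely generated over $\cH$; (b) the generic rank of $M$ over $\cH$ equals $2^{n-1}$, where $n=\dim_E V$; and (c) for every point $\chi\in\Spec\cH$ one has $\dim_{\bC}\bigl(M\otimes_{\cH}\kappa(\chi)\bigr)\le 2^{n-1}$. These imply the theorem: the fibre-dimension function $\chi\mapsto\dim M\otimes\kappa(\chi)$ is upper semicontinuous and attains its minimum, namely the generic rank $2^{n-1}$ by (b), on a dense open subset; hence it is $\ge 2^{n-1}$ everywhere, while by (c) it is $\le 2^{n-1}$ everywhere, so it is constant. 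Thus $M$ is locally free of rank $2^{n-1}$ over $\cH$, and a finitely generated projective module over the polynomial ring $\cH$ is free (Quillen--Suslin; alternatively one exploits the grading of $M$ and $\cH$ induced by the Cartan filtration). In particular $M$ is automatically torsion-free, so no separate argument for that is needed.

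The geometric input behind (a)--(c) is a clean description of $X$. Any two non-degenerate Hermitian forms on $V=E^n$ of determinant $1$ have the same discriminant (namely $1\in N_{E/F}(E^\times)$) and hence lie in one $\GL_n(E)$-orbit; and since a full unitary group $U(h)$ surjects under $\det$ onto the norm-one subgroup of $E^\times$, one can further arrange the conjugating matrix to lie in $\SL_n(E)$. Therefore $X(F)$ is a single homogeneous space $G/SU_n$, a symmetric variety for the involution $\theta(g)=\bar g^{-t}$, and in particular spherical. Sphericity yields finitely many $B(F)$-orbits on $X(F)$ and, by the standard finiteness properties of spherical varieties, finite generation of $M$ over $\cH$, giving (a).

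For (b) and (c), fix an unramified character $\chi$ of $G$ and the corresponding principal series $I(\chi)=\Ind_{B}^{G}\chi$ (normalized induction); via Satake these $\chi$ exhaust $\Spec\cH$ and $\dim M\otimes_{\cH}\kappa(\chi)=\dim\Hom_{G}(\Sc(X),I(\chi))=\dim\Hom_{B(F)}(\Sc(X),\delta_{B}^{1/2}\chi)$ by Frobenius reciprocity. Stratifying $\Sc(X)$ by its $B(F)$-orbit supports, each orbit contributes at most one dimension to this $\Hom$-space, and an orbit that is not open contributes only for $\chi$ in a proper Zariski-closed subset of $\Spec\cH$. For the open orbits: the unique open $B_{\bar F}$-orbit has its $F$-points partitioned into $B(F)$-orbits indexed by $\ker\bigl(H^1(F,S)\to H^1(F,B)\bigr)=H^1(F,S)$, where $S$ is the reductive part of the generic stabilizer; for $X=G/SU_n$ this $S$ is isogenous to a product of $n-1$ norm-one tori $R^1_{E/F}\bG_m$, and $H^1(F,R^1_{E/F}\bG_m)=F^\times/N_{E/F}(E^\times)\cong\bZ/2$, so there are exactly $2^{n-1}$ open $B(F)$-orbits. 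Since $S(F)$ is compact, each of these contributes exactly one dimension for every $\chi$, and for generic $\chi$ no other orbit contributes, which gives (b).

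The main obstacle is the uniform upper bound (c): at non-generic $\chi$ the boundary $B(F)$-orbits may contribute, and the naive orbit count then exceeds $2^{n-1}$, so one must show the surplus is absorbed by the connecting maps of the stratification. This is exactly where the hypotheses enter: because $E/F$ is unramified of residual characteristic $\ne 2$, the residual unitary groups occurring in the boundary degenerations of $X$ are split and the associated local period integrals are regular, forcing the relevant $\mathrm{Ext}^{1}$-terms to vanish and the connecting maps to cancel the extra contributions, so that $\dim\Hom_{B(F)}(\Sc(X),\delta_{B}^{1/2}\chi)\le 2^{n-1}$ for every $\chi$. With (a), (b) and (c) in place, the reduction of the first paragraph shows $\Sc(X)^{K_0}$ is free of rank $2^{n-1}=2^{\dim V-1}$ over $\cH(G,K_0)$.
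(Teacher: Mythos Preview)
Your strategy is quite different from the paper's, and the decisive step (c) is not actually proved. You correctly identify the obstacle: at non-generic $\chi$ the boundary $B(F)$-orbits can contribute, pushing the naive count above $2^{n-1}$. But the resolution you offer --- that unramifiedness and odd residue characteristic make ``the residual unitary groups \dots split'' and force ``the relevant $\mathrm{Ext}^{1}$-terms to vanish'' so that ``connecting maps cancel the extra contributions'' --- is an assertion, not an argument. No mechanism is given, and the individual claims are doubtful: the unitary groups in question are quasi-split, not split, and $\mathrm{Ext}^1$-vanishing for boundary strata of spherical varieties is in general false. A secondary issue is that the identity $\dim M\otimes_{\cH}\kappa(\chi)=\dim\Hom_G(\Sc(X),I(\chi))$ requires the functor $V\mapsto V^{K_0}$ to be fully faithful between the relevant subcategories, which is fine when $I(\chi)$ is irreducible but needs justification precisely at the reducible points where you invoke (c).

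The paper bypasses all of this with a purely combinatorial filtration argument. Using the Cartan decompositions of $G$ and of $X$, the collections $\{a_\lambda\}_{\lambda\in\Lambda^{++}}$ and $\{m_\lambda\}_{\lambda\in\Lambda^{++}}$ give explicit bases of $\cH$ and $M$. A direct lattice/Hermitian-form computation (Lemma \ref{lemma: explicitrepresentatives}) shows $a_\lambda m_\mu = q(\lambda,\mu)\,m_{2\lambda+\mu}+(\text{lower terms})$ with $q(\lambda,\mu)\neq 0$. Passing to the associated graded for the resulting $\bZ^{n-1}$-filtration, $\mathrm{gr}(M)$ is visibly free over $\mathrm{gr}(\cH)$ with basis indexed by $\Lambda^{++}/2\Lambda^{++}$, of size $2^{n-1}$; a filtered-module lemma adapted from Bernstein--Lunts (Proposition \ref{grfree}) then lifts freeness to $M$ itself. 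No principal series, no $B$-orbit stratification, no $\mathrm{Ext}$-groups, and no Quillen--Suslin are needed.
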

}
\RamiD{
\begin{remark}
In \cite{Hir} a version of the above theorem concerning $GL(V)$ instead of $SL(V)$ was proven. It is not difficult to show that those two versions are equivalent.
\end{remark}
}
The proof in \cite{Hir} was spectral in that it was based on the explicit determination of the spherical functions on the space $X$ associated to unramified representations.
In our approach the proof is based solely on the geometry of the spherical space $X$ and on the analysis of $K_0$ orbits.

\subsection{Idea of the proof}
 The proof is based on a reduction technique we learned from \cite{BL} regarding filtered modules over filtered algebras. This technique allows to deduce the freeness of a module from the freeness of its associated graded. While classically one studies $\mathbb{Z}$-filtered modules, we need to adapt the technique to the case of $\mathbb{Z}^{n}$-filtered modules.

The filtrations we use on the \RamiD{s}pherical Hecke algebra and the spherical Hecke module $\RamiD{\Sc}(X)^{K_0}$ are obtained from \RamiD{Cartan decompositions}.

\subsection{Possible generalizations}
\RamiD{One can not expect that the conclusion of the Theorem holds for any spherical space. Nevertheless, we expect that for a large class of spherical spaces, one can find a subalgbera $B$ of $\cH(G,K_0)$  over which the module $\Sc(X(F))^{K_0}$ is free.}

Our proof of Theorem \ref{thm:Intro.Hir} is based on certain geometric properties that we expect to holds for many symmetric spaces. Informally, we used the fact that the symmetric space $X$ admits a nice Cartan decomposition. More precisely, we use a collection $\{g_{\lambda}\ | \lambda \in \Lambda^{++}\} \subset G$ and a collection $\{x_{\lambda}\ | \delta \in \Delta^{++}\} \subset X$, where $\Lambda^{++} \subset \Lambda$\ is a Weyl chamber of the coweight lattice $\Lambda$\ and similarly for $\Delta^{++} \subset \Delta$ with the following properties:

\begin{itemize}
\item  $G=\bigsqcup_{\lambda \in \Lambda^{++}}K_0g_{\lambda}K_0$
\item $X=\bigsqcup_{\delta \in \Delta^{++}}K_0 \cdot x_{\delta}$
\item $K_0g_{\lambda}K_0 \cdot K_0g_{\mu}K_0=\bigsqcup_{w \in W_{\Lambda}}K_0g_{[w(\lambda)+\mu]}K_0$
where $\{[\gamma]\}:=\left(W_{\Lambda}\cdot \gamma\right) \cap \Lambda^{++}$
\item $ K_0g_{\lambda}K_0 \cdot K_0x_{\delta}=\bigsqcup K_0 \cdot x_{[s(\lambda)+\mu]}$where $\{[\gamma]\}:=\left(W_{\Delta}\cdot \gamma\right) \cap \Delta^{++}$ and $s:\Lambda \to \Delta$ is a certain symmetrization map.
\end{itemize}

We expect that under the above conditions, and certain technical conditions on the lattices $\Delta,\Lambda$, it will be possible to adapt our argument to hold  for any such $X$. In view of \cite{SaSph} we expect those conditions to hold in many cases, but not for all symmetric pairs

\subsection{Acknowledgments}: We would like to thank Omer Offen and Erez Lapid for conversations on \cite{FLO} that motivated our interest in this problem. Part of the work on this paper was done during the research program {\it Multiplicities in representation theory} at the HIM.

\section{Filtered modules and algebras}
We first \RamiD{fix some} terminology regarding filtered modules and algebras.

\begin{definition}$ $
\begin{itemize}
\item \RamiD{F}or $i,j \in \Z^n$ we say that $j\leq i$ if \RamiE{$i-j\in \Z_{\geq 0}^n:=(\Z_{\geq 0})^n$.}
\item \RamiD{B}y a $\Z_{}^n$-filtration on a vector space $V$ we mean a collection of subspaces $F_i(V) \subset V$ for $i \in \Z_{}^n$  s.t. there exist a $\Z_{}^n$\RamiD{-}grading $V =\bigoplus_{i\in \Z_{}^n} F^0_i(V)$ with $F_i(V)=\bigoplus_{j\leq i} F^0_j(V).$

\item For \RamiE{a} $\Z_{}^n$-filtrated vector space \RamiE{$V$,} we denote $Gr^{i}_{F}(V):=F_{i}(V)/\sum_{j<i} F_{j}(V),$ and
 $Gr_{F}(V):= \bigoplus Gr^{i}_{F}(V)$.

\item \RamiD{A}
$\Z_{}^n$-filtration on  an algebra $A$ is a $\Z_{}^n$-filtration $F^i(A)$ on the underlying vector space such that $F_{i}(A)F_{j}(A) \subset F_{i+j}(A).$ Note  that in such a case $Gr_{F}(A)$ is  $\Z_{}^n$-graded algebra.
\item Let $\phi:\Z^n\to \Z^m$ be a morphism. \RamiD{Let $(A,F^{0})$ be $\Z^n$-graded algebra. A} $\phi$-grading on an $A$-module $M$ is a $\Z^m$\RamiD{-}grading $G^0_i(M)$ on the underlying vector space \RamiD{M} such that $F^0_{i}(A)G^0_{j}(M) \subset G^0_{\phi(i)+j}(M).$
\item Let $\phi:\Z^n\to \Z^m$ be a morphism and let $(A,F)$ be \RamiE{a} $\Z^n$ filtrated algebra. A $\phi$-filtration on an $A$-module $M$ is a $\Z^m$-filtration $G_i(M)$ on the underlying vector space such that $F_{i}(A)G_{j}(M) \subset G_{\phi(i)+j}(M).$ Note  that in such a case $Gr_{G}(M)$ is  a $\phi$-graded module over $Gr_{F}(A)$.


\end{itemize}
\end{definition}

The following is an adaptation of a trick we learned from \cite{BL}
(see Lemma 4.2).


\begin{prop}\label{grfree}
Let $\phi:\Z^n\to \Z^m$ be a morphism.

Let $(M,G)$ be a $\RamiB{\mathcal{\phi}}$-filtered module over a $\Z^n$-filtered commutative algebra $(A,F)$.  \RamiD{Assume that for any $i\notin \Z_{\geq 0}^n$ we have $Gr^{i}_{F}(A)= 0$ and  for any $i\notin \Z_{\geq 0}^m$ we have $Gr^{i}_{G}(M)= 0$.}
Suppose that  $Gr_{G}(M)$ is finitely generates free graded module over $Gr_{F}(A)$ (i.e. there exists finitely many homogenous elements that freely \RamiB{generate} $Gr_{G}(M)$).
Then $M$ is a finitely generated free $A$-module.

More specifically if $\bar m_{1},\dots, \bar m_{k} \in Gr_{G}(M)$ are homogenous elements that freely generates   $Gr_{G}(M)$  over $Gr_{F}(A)$, then any lifts  $m_{1},\dots,  m_{k} \in M$ freely generates  $M$  over $A$.

\end{prop}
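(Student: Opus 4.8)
The plan is to adapt to the $\Z^n$-filtered setting the standard argument (the one from \cite{BL}) that freeness of the associated graded lifts to freeness of the module: I will show that the chosen lifts $m_1,\dots ,m_k$ generate $M$ over $A$, and that they satisfy no nontrivial $A$-linear relation. Fix homogeneous degrees $d_i\in \Z_{\geq 0}^m$ with $\bar m_i\in Gr^{d_i}_{G}(M)$, and take each lift $m_i$ to lie in $G_{d_i}(M)$. The hypotheses give $F^0_i(A)=0$ for $i\notin \Z_{\geq 0}^n$ and $G^0_j(M)=0$ for $j\notin \Z_{\geq 0}^m$, so every element of $A$ (resp.\ $M$) has support in $\Z_{\geq 0}^n$ (resp.\ $\Z_{\geq 0}^m$), $G_j(M)=0$ for $j\notin \Z_{\geq 0}^m$, and $M=\bigcup_j G_j(M)$. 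For generation, set $N:=\sum_i A m_i$; it suffices to show $G_j(M)\subseteq N$ for all $j$, which I prove by induction on $j\in \Z_{\geq 0}^m$ (the product order on $\Z_{\geq 0}^m$ is well-founded, since the $\ell^1$-norm strictly decreases along a strictly decreasing chain). Given $m\in G_j(M)$, write its class $\bar m\in Gr^j_{G}(M)$ as a finite sum $\sum \gamma_{i,e}\bar m_i$ with $\gamma_{i,e}\in Gr^e_{F}(A)$ homogeneous and $\phi(e)+d_i=j$; lift each $\gamma_{i,e}$ to $c_{i,e}\in F^0_e(A)$ and put $m':=\sum c_{i,e}m_i$. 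Then $m'\in G_j(M)$ with class $\bar m$, so $m-m'\in\sum_{j'<j}G_{j'}(M)\subseteq N$ by the inductive hypothesis, while $m'\in N$ by construction; hence $m\in N$, and $M=N$.

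For freeness the real work is to reduce the partially ordered situation to a totally ordered one. I would fix additive total orders $\prec$ refining the product orders on $\Z^n$ and on $\Z^m$, chosen \emph{compatibly with $\phi$}, i.e.\ so that $e\preceq e'$ implies $\phi(e)\preceq\phi(e')$; in the situation at hand this is possible — for instance, take $\prec$ on $\Z^m$ given by a strictly positive linear functional with $\bQ$-linearly independent coefficients and $\prec$ on $\Z^n$ the order given by its pullback along $\phi$, whose strict positivity is exactly the geometric monotonicity $\phi(\Z_{\geq 0}^n)\subseteq\Z_{\geq 0}^m$. Replacing $F$ and $G$ by the coarser filtrations indexed by $(\Z^n,\prec)$ and $(\Z^m,\prec)$ leaves $Gr_{F}(A)$ and $Gr_{G}(M)$ unchanged as graded objects — same graded pieces, same products — and keeps the $\bar m_i$ homogeneous; moreover $\prec$ restricts to a well-order on $\Z_{\geq 0}^n$ and on $\Z_{\geq 0}^m$, so the refined filtrations are still bounded below. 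We may therefore assume the index groups are totally ordered.

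Now the classical symbol argument applies. For $0\neq a\in A$ let $\nu(a)$ be the $\prec$-top degree occurring in the support of $a$ and $\sigma(a)\in Gr^{\nu(a)}_{F}(A)$ its (nonzero) leading symbol, and similarly for nonzero elements of $M$. Compatibility of $\prec$ with $\phi$ shows that for $a\neq 0$ the element $am_i$ has leading symbol $\sigma(a)\bar m_i$ in degree $\phi(\nu(a))+d_i$, and this is nonzero because $Gr_{G}(M)$ is free on the $\bar m_i$; in particular $am_i\neq 0$ and $\nu(am_i)=\phi(\nu(a))+d_i$. Now suppose $\sum_i a_im_i=0$ with not all $a_i=0$; let $p^{*}$ be the $\prec$-largest of the degrees $\phi(\nu(a_i))+d_i$ over the indices with $a_i\neq 0$, and let $I^{*}$ be the set of such indices attaining it. Projecting the relation onto $Gr^{p^{*}}_{G}(M)$ annihilates every term with $i\notin I^{*}$ and gives $\sum_{i\in I^{*}}\sigma(a_i)\bar m_i=0$; freeness of $Gr_{G}(M)$ then forces $\sigma(a_i)=0$ for $i\in I^{*}$, contradicting $a_i\neq 0$. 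Hence the only relation is the trivial one, so $m_1,\dots ,m_k$ is a free $A$-basis of $M$; as $Gr_{G}(M)$ is finitely generated, $M$ is a finitely generated free $A$-module of rank $k$.

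I expect the single real obstacle to be exactly this passage from $\Z$-filtrations to $\Z^n$-filtrations: one must pick the refining total orders in a way compatible with the action map $\phi$, check that the refinement changes neither the associated graded algebra nor the associated graded module, and confirm that the refined filtrations remain bounded below so that the two inductions terminate. Once that bookkeeping is in place, the rest is the usual lifting-of-free-generators argument, and commutativity of $A$ plays no essential role beyond making "free module" unambiguous.
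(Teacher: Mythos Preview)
Your argument is correct and complete in spirit, and it takes a genuinely different route from the paper's. The paper proceeds inductively: first the classical $\Z$-filtered case (citing \cite{BL}), then by induction on $n$ for $\phi=\mathrm{id}$ by peeling off one coordinate at a time and recognizing an iterated associated graded, and finally general $\phi$ by pushing the $\Z^n$-filtration on $A$ forward to a $\Z^m$-filtration along $\phi$ and observing that, while the resulting $Gr_{\bar F}(A)$ need not be isomorphic to $Gr_F(A)$ as an algebra, it acts on $Gr_G(M)$ through the same formulas. Your approach collapses all of this into a single refinement to compatible additive total orders and the standard leading-symbol argument; this is cleaner and avoids the bookkeeping of nested filtrations, at the cost of having to engineer the total orders carefully.

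Two small points deserve attention. First, the pullback $\ell\circ\phi$ is a total order on $\Z^n$ only when $\phi$ is injective; in general you must refine it (e.g.\ break ties with a second strictly positive, injective linear functional on $\Z^n$), and then check that the resulting order still refines the product order and is still $\phi$-monotone --- both are easy. Second, and relatedly, your claim that the $G^0_{p^*}$-component of $a_i m_i$ is exactly $\sigma(a_i)\bar m_i$ is only literally true when $\phi$ is injective; for general $\phi$ one instead gets $\big(\sum_{e:\,\phi(e)=\phi(\nu(a_i))}\overline{(a_i)_e}\big)\cdot\bar m_i$, a sum of homogeneous pieces of $a_i$ in distinct $Gr_F$-degrees. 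This does not affect the conclusion: freeness of $Gr_G(M)$ over $Gr_F(A)$ still forces each homogeneous piece to vanish, in particular $\sigma(a_i)=0$, giving the same contradiction. Both your proof and the paper's implicitly use $\phi(\Z_{\geq 0}^n)\subseteq\Z_{\geq 0}^m$, which holds in the intended application ($\phi=$ multiplication by $2$).
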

 \begin{proof}$ $
\begin{enumerate}[{Step 1.}]
\item Proof in the case $m=n=1$, $\phi=id$.\\
See, the proof of \cite[Lemma 4.2]{BL}.
\item Proof in the case $\phi=id$.\\
The proof is by induction on $n$.
Let $\bar m_{1},\dots, \bar m_{k} \in Gr_{G}(M)$ be homogenous elements that freely generates  $Gr_{G}(M)$  over $Gr_{F}(A)$ and  $m_{1},\dots,  m_{k} \in M$ be there lifts.

 For $i \in \Z$, we let $\bar{F}_{i}(A)=\sum_{k \in \Z^{(n-1)}} F_{(i,k)}(A).$  Similarly, we define $\bar{G}_{i}(M)=\sum_{k \in \Z^{(n-1)}} G_{(i,k)}(M).$
These are $\Z$-filtrations. Set $n_{1},\dots,  n_{k} \in Gr_{\bar{G}}(M)$ to be the reductions of $m_{1},\dots,  m_{k} \in M$.

By step 1 it is enough to show that $Gr_{\bar{G}}(M)$ is freely generated by $n_{1},\dots,  n_{k}$ over  $Gr_{\bar{F}}(A)$. For this, define a $ \Z^{(n-1)}$-filtrations on $Gr_{\bar{F}}(A)$ and  $Gr_{\bar{G}}(M)$ by $\widetilde{F}_{j}(Gr^{i}_{\bar{F}}(A))=F_{(i,j)}(A)/F_{(i,j)}(A) \cap \bar{F}_{i-1}(A)$ and  $\widetilde{G}_{j}(Gr^{i}_{\bar{G}}(M))=G_{(i,j)}(M)/G_{i,j}(M) \cap \bar{G}_{i-1}(M).$
\RamiD{The existence of the gradings $F_{i}^0(A), G_{i}^0(M)$  implies that $Gr_{\widetilde{F}}(Gr_{\bar{F}}(A)) \cong Gr_{F}(A)$ and $Gr_{\widetilde{G}}(Gr_{\bar{G}}(M)) \cong Gr_{G}(M).$
Furthermore,   $\bar m_{1},\dots, \bar m_{k}$ are the $\widetilde{G}$-reductions of $n_{1},\dots,  n_{k}$.} Thus, the induction hypothesis implies that $Gr_{\bar{G}}(M)$ is freely generated by $n_{1},\dots,  n_{k}$ over  $Gr_{\bar{F}}(A)$.
\item  The general case.\\ Define $\Z^m$-filtration on $A$ by $\bar F_j(A)=\sum_{i\in \phi^{-1}(j)} F_j(A)$. By step 2, it is enough to show that $Gr_{{G}}(M)$ is freely generated by $\bar m_{1},\dots, \bar m_{k}$ over $Gr_{\bar{F}}(A).$ For this we choose a gradation $F_i^0$  s.t. $F_i(A)=\bigoplus_{j\leq i} F^0_j(A).$ This gives us a linear isomorphism $\psi:Gr_{\bar{F}}(A) \to Gr_{{F}}(A)$ s.t. $\psi(a)m=am$. \RamiD{We note that $\psi$ is not necessary an algebra homomorphism.} Since  $Gr_{{G}}(M)$ is freely generated by $\bar m_{1},\dots, \bar m_{k}$ over $Gr_{{F}}(A),$ this implies that $Gr_{{G}}(M)$ is freely generated by $\bar m_{1},\dots, \bar m_{k}$ over $Gr_{\bar{F}}(A).$

 \end{enumerate}

 \end{proof}

%
%
%
%

\section{Reduction to the Key Proposition}\label{Hirrevisited}
\setcounter{thm}{0}
In this section we prove Theorem \ref{thm:Intro.Hir}. We will need some notations:

\begin{itemize}
\item
Fix a natural number $n$. Let \RamiB{$H:=H_n:=SL_n$}.
\item
Let $E/F$ be an unramified quadratic extension of non-archimedean local fields of characteristic diffent than $2$.
\item  We let $\tau:E \to E$ be the Galois involution.

\item \EitanC{Let
$G=G_n:=Res^{E}_{F}(H_n)$ be the restriction of scalars of $\RamiB{H}$ to $E$ (in particular  $G(F)=\RamiB{H}(E)$).
}
\item \RamiB{We also fix} $X:=X_n$ the natural algebraic variety s.t. $X(F)=\{x \in G(E)|\tau(x^{t})=x\}$.
\item Let  $G$ act on $X$ by $$g \cdot x=gx\tau(g^{t})\RamiB{.}$$
\item
Let $D \subset X$ be the subset of diagonal matrices.
\item \EitanC{Finally, we let $T \subset G$ be the standard torus}.
\end{itemize}

In the above notations, Theorem \ref{thm:Intro.Hir} reads as follows:

\begin{theorem}\label{thm:Hir}
The module $\Sc(\Rami{X(F)})^{K_0}$ is free of rank $2^{n-1}$ over $\cH(G,K_0)$ where $K_0:=SL(n,\mathcal{O}_{E})$ is the standard maximal open subgroup of $G(F)$.

\end{theorem}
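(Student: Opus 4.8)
The plan is to apply Proposition~\ref{grfree} with $A = \cH(G,K_0)$ and $M = \Sc(X(F))^{K_0}$, using filtrations coming from the Cartan decompositions of $G$ and of $X$. First I would record the combinatorial data: the Cartan decomposition $G(F) = \bigsqcup_{\lambda \in \Lambda^{++}} K_0 g_\lambda K_0$ gives a basis $\{[K_0 g_\lambda K_0]\}_{\lambda \in \Lambda^{++}}$ of $\cH(G,K_0)$ indexed by dominant coweights of $SL_n$, i.e.\ by $\Lambda^{++} \cong \Z_{\geq 0}^{n-1}$; similarly, the decomposition of $X(F)$ into $K_0$-orbits $X(F) = \bigsqcup_{\delta \in \Delta^{++}} K_0 \cdot x_\delta$ gives a basis $\{\mathbf{1}_{K_0 \cdot x_\delta}\}$ of $\Sc(X(F))^{K_0}$ indexed by a set $\Delta^{++}$ which, for the space of unimodular Hermitian forms, is in bijection with $\Z^{n-1}$ (here the extra factor of $2$ in the rank will appear: modulo the action of the relevant Weyl group the orbit set has "twice as many" chambers as $\Lambda^{++}$, reflecting $\Delta^{++} \cong \Z^{n-1}$ versus $\Lambda^{++} \cong \Z_{\geq 0}^{n-1}$). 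I would set up $\Z^{n-1}$-filtrations on $A$ and $M$: $F_i(A) = \Span\{[K_0 g_\lambda K_0] : \lambda \leq i\}$ and $G_j(M) = \Span\{\mathbf{1}_{K_0 \cdot x_\delta} : \delta \leq j\}$ (after identifying $\Lambda^{++}, \Delta^{++}$ with sublattices or translates of $\Z^{n-1}$ so that the partial order makes sense), with $\phi: \Z^{n-1} \to \Z^{n-1}$ the symmetrization map $s$ from the Idea-of-proof section.

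The crux is then to identify $Gr_F(A)$ and $Gr_G(M)$ and check the module is free. The third bulleted property, $K_0 g_\lambda K_0 \cdot K_0 g_\mu K_0 = \bigsqcup_{w \in W_\Lambda} K_0 g_{[w(\lambda)+\mu]} K_0$, shows that in the associated graded the structure constants collapse: the product of the symbols of $[K_0 g_\lambda K_0]$ and $[K_0 g_\mu K_0]$ is the symbol of $[K_0 g_{\lambda + \mu} K_0]$ (the top term $w = 1$), so $Gr_F(A)$ is the monoid algebra of $\Lambda^{++} \cong \Z_{\geq 0}^{n-1}$, i.e.\ a polynomial ring $k[t_1, \dots, t_{n-1}]$. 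This is precisely Satake's theorem recovered at the level of graded rings. Likewise the fourth property, $K_0 g_\lambda K_0 \cdot K_0 x_\delta = \bigsqcup K_0 \cdot x_{[s(\lambda)+\delta]}$, shows that $Gr_G(M)$, as a $Gr_F(A)$-module, has the symbol of $\mathbf{1}_{K_0 \cdot x_\delta}$ sent by $t^\lambda$ to the symbol of $\mathbf{1}_{K_0 \cdot x_{s(\lambda) + \delta}}$. Thus $Gr_G(M)$ is the $\Z^{n-1}$-graded vector space with basis indexed by $\Delta^{++}$, on which the polynomial ring acts by "translation by $s(\Lambda^{++})$." Since $s(\Lambda^{++})$ is (a translate of) a full-rank sub-semigroup and $\Delta^{++}$ decomposes into finitely many — precisely $2^{n-1}$ — cosets of $s(\Lambda^{++})$ inside its saturation, each such coset is a free rank-one $Gr_F(A)$-module, and $Gr_G(M)$ is free of rank $2^{n-1}$, generated by the symbols of the $\mathbf{1}_{K_0 \cdot x_\delta}$ for $\delta$ ranging over a set of coset representatives. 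Finally, the vanishing hypotheses of Proposition~\ref{grfree} ($Gr^i_F(A) = 0$ for $i \notin \Z_{\geq 0}^{n-1}$, and $Gr^j_G(M) = 0$ for $j \notin \Z_{\geq 0}^{m}$ after an appropriate shift of the indexing of the $G$-filtration so that $\Delta^{++}$ lands in $\Z_{\geq 0}^{n-1}$) follow from the definitions of the filtrations; then Proposition~\ref{grfree} yields that $M = \Sc(X(F))^{K_0}$ is free over $A = \cH(G,K_0)$ of rank $2^{n-1}$, with explicit generators any lifts of the chosen orbit symbols.

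I expect the main obstacle to be the verification of the two multiplicativity statements (the third and fourth bullets) — that is, the precise description of the Cartan decomposition of $X(F)$, the $K_0$-orbit structure, and the way Hecke operators move orbits, including the exact combinatorics of the symmetrization map $s: \Lambda \to \Delta$ and the index $[\Delta^{++} : s(\Lambda^{++})]$ being $2^{n-1}$. This is where the hypotheses on $E/F$ (unramified, residual characteristic $\neq 2$) enter, since they control the $p$-adic geometry of unimodular Hermitian forms and the relevant elementary divisor theory; I would handle this by an explicit normal-form analysis of Hermitian matrices over $\cO_E$, reducing to diagonal forms $D \subset X$ and computing the action of the basic Hecke operators on indicator functions of orbits directly. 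A secondary technical point is making the identifications of $\Lambda^{++}$ and $\Delta^{++}$ with subsets of $\Z^{n-1}$ compatible with a single partial order so that the abstract machinery of Section~2 applies verbatim; this is bookkeeping once the orbit combinatorics is in hand. Everything else — the associated-graded computation and the appeal to Proposition~\ref{grfree} — is then formal.
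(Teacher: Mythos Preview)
Your approach is essentially the paper's: filter $A=\cH(G,K_0)$ and $M=\Sc(X(F))^{K_0}$ via the Cartan-type decompositions, pass to the associated graded, show that $Gr_G(M)$ is a finitely generated free graded $Gr_F(A)$-module, and invoke Proposition~\ref{grfree}. You also correctly identify that the real work lies in the orbit multiplicativity statements (this is Proposition~\ref{prop: maincomp} in the paper).

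One detail of your combinatorial picture is off, though, and it is worth correcting because it is exactly where the $2^{n-1}$ comes from. In this case the $K_0$-orbits on $X(F)$ are parametrized by the \emph{same} set $\Lambda^{++}$ as the $K_0$-double cosets in $G(F)$ (Lemma~\ref{lemma: representatives}(2)); after the change of coordinates $\tau$ both identify with $\Z_{\geq 0}^{n-1}$. So $\Delta^{++}$ is not larger than $\Lambda^{++}$. The factor $2^{n-1}$ arises instead because the map $\phi$ is multiplication by $2$: the Key Proposition reads
\[
a_\lambda m_\mu = q(\lambda,\mu)\, m_{2\lambda+\mu} + (\text{lower terms}),
\]
reflecting that the action $g\cdot x = g\,x\,\tau(g^t)$ is quadratic in $g$. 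Hence $Gr_G(M)$ is free over $Gr_F(A)\cong \C[\Z_{\geq 0}^{n-1}]$ with basis indexed by representatives for the disjoint decomposition $\Lambda^{++}=\bigcup_{\ell\in L}(\ell+2\Lambda^{++})$, and $|L|=2^{n-1}$. With this adjustment your sketch is the paper's proof; your proposed ``explicit normal-form analysis'' is what the paper does in Lemma~\ref{lemma: explicitrepresentatives} to verify Proposition~\ref{prop: maincomp}(2).
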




\begin{notation}
$ $
\begin{itemize}
\item $\pi$ a uniformizer in $\mathcal{O}_{E}$.
\item $q_{F}=|O_{F}/P_{F}|$, $q_{E}=|O_{E}/P_{E}|$.

\item $\Lambda$ the weight lattice of $G.$ We identify it with \RamiD{$\{(\lambda_1,\dots,\lambda_n)\in\mathbb{Z}^{n}|\lambda_1+\cdots+\lambda_n=0\}$}.
\item $\Lambda^{+}=\{\lambda=(\lambda_1, \dots, \lambda_n) \in \Lambda|\sum_{i=1}^k \lambda_i \geq 0 \,\, \, \forall k=1, \dots, n \}.$
\item $\Lambda^{++}=\{\lambda=(\lambda_1, \dots, \lambda_n) \in \Lambda| \lambda_k-\lambda_{k-1} \leq 0 \,\, \, \forall k=2,\dots, n \}.$ \RamiD{Note that $\Lambda^{++}\subset \Lambda^{+}.$}
\item for $\lambda \in \Lambda$ we set \RamiB{$\pi^{\lambda}:=\lambda(\pi) \in G(F).$}
\item  for $\lambda \in \Lambda$ we set \RamiB{$x_{\lambda}$ to be $\lambda(\pi)$ considered as an element in  $X(F).$}
\item Let $a_{\lambda}=e_{K_{0}\delta_{\pi^\lambda} K_{0}} \in \mathcal{H}(G,K_{0}).$
\item Let $m_{\lambda}=e_{K_{0}\delta_{x_\lambda}} \in \Rami{\Sc(X(F))}^{K_{0}}.$
\item We denote $\lambda \geq \lambda'$ iff $\lambda-\lambda' \in \Lambda^{+}.$ In this case, if $\lambda \ne \lambda'$ we denote $\lambda >\ \lambda'.$

\end{itemize}
\end{notation}

\RamiD{The following lemma is well known}\footnote{\RamiD{Part (1) is the  classical Cartan decomposition $G=K_{0}A^{++}K_{0}.$} A version of part (2) is proven in \cite{Jac}.}
\begin{lemma}\label{lemma: representatives}
$ $
\begin{enumerate}
\item The collection  $\{\pi^{\lambda}| \lambda \in \Lambda^{++}\}$ is \RamiB{a} complete set of representatives for the orbits of $K_{0} \times K_{0}$ on $G$.
\item The collection $\{x_{\lambda}| \lambda \in \Lambda^{++}\}$ is \RamiB{a} complete set of representatives for the orbits of $K_{0}$ on $X$.
\end{enumerate}
\end{lemma}

%
\begin{corollary}
$ $
\begin{enumerate}
\item The collection $\{a_{\lambda}| \lambda \in \Lambda^{++}\}$ is a basis for $\mathcal{H}(G,K_{0})$.
\item The collection $\{m_{\lambda}| \lambda \in \Lambda^{++}\}$ is a basis for $\Rami{\Sc(X(F))}^{K_{0}}$.
\end{enumerate}
\end{corollary}

This Corollary leads naturally to the following filtration on \RamiB{the} module $M:=\Sc(\Rami{X(F)})^{K_0}$ and the Hecke algebra $A:=\RamiB{\cH}(G,K_{0}).$

\begin{definition}\label{def:flts}
$ $
For $\lambda \in \Lambda$ we  introduce the subspaces
\begin{itemize}
\item $F_{\leq \lambda}(A)=Span_{\cc}\{a_{\mu}| \mu \leq \lambda \ ;  \mu \in  \Lambda^{++} \},\quad$
$F_{<\lambda}(A)=Span_{\cc}\{a_{\mu}| \mu < \lambda\}$
\item $\EitanC{G}_{\leq \lambda}(M)=Span_{\cc}\{m_{\mu}| \mu \leq \lambda \ ; \mu \in \Lambda^{++} \},\quad$
$\EitanC{G}_{<\lambda}(M)=Span_{\cc}\{m_{\mu}| \mu < \lambda\}$
\end{itemize}
\end{definition}

With this filtration we have the following Key Proposition:
\begin{proposition}\label{prop: maincomp}
$ $

\begin{enumerate}
\item  For every $\lambda \in \Lambda^{++}$ and $\mu \in \Lambda^{++}$ there exists a non-zero $p(\lambda,\mu) \in \cc$ such that $$a_{\lambda}a_{\mu}=p(\lambda,\mu)a_{ \lambda+\mu}+\RamiB{r}$$ with $\RamiB{r} \in F_{< \lambda+\mu}(A)$.

\item For every $\lambda \in \Lambda^{++}$ and $\mu \in \Lambda^{++}$ there exists a non-zero $q(\lambda,\mu) \in \cc$ and we have $$a_{\lambda}m_{\mu}=q(\lambda,\mu)m_{2 \lambda+\mu}+\RamiB{\delta}$$ where $\RamiB{\delta} \in \EitanC{G}_{<2 \lambda+\mu}(M)$.
\end{enumerate}
\end{proposition}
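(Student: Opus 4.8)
The plan is to compute both products explicitly via the combinatorics of double cosets, using the fact that the structure constants of $\cH(G,K_0)$ in the Satake/Cartan basis are counts of lattice configurations, and that the action of $\cH(G,K_0)$ on $\Sc(X(F))^{K_0}$ has a similar description. For part (1), I would write $a_\lambda a_\mu = \sum_{\nu\in\Lambda^{++}} c(\lambda,\mu;\nu)\, a_\nu$, where $c(\lambda,\mu;\nu)$ counts the number of points in $K_0\pi^\lambda K_0/K_0$ lying in a fixed coset whose product with a suitable point of $K_0\pi^\mu K_0/K_0$ lands in $K_0\pi^\nu K_0$. The key is a \emph{support} statement: if $c(\lambda,\mu;\nu)\neq 0$ then $\nu\leq\lambda+\mu$ in the dominance order, with equality achieved and the top coefficient $p(\lambda,\mu)=c(\lambda,\mu;\lambda+\mu)$ nonzero. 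The support bound is the standard fact that $K_0\pi^\lambda K_0\cdot K_0\pi^\mu K_0\subset\bigsqcup_{\nu\leq\lambda+\mu}K_0\pi^\nu K_0$ (it follows, e.g., from the theory of elementary divisors: a product of matrices with invariant factors $\pi^{\lambda_i}$ and $\pi^{\mu_j}$ has invariant factors bounded above in dominance order by $\lambda+\mu$). The equality $\nu=\lambda+\mu$ occurs: take the ``generic'' product $\pi^\lambda u\pi^\mu$ for $u$ unipotent upper-triangular with entries in $\cO_E^\times$ placed to avoid cancellation, and one checks the invariant factors are exactly $\lambda+\mu$; nonvanishing of the count is then a Schubert-cell type computation over the residue field, which is nonzero because the relevant variety is a nonempty open subset of an affine space over a finite field. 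Since both $\lambda,\mu\in\Lambda^{++}$ and $\lambda+\mu\in\Lambda^{++}$, no Weyl-group folding is needed at the top.

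For part (2) the argument is parallel but with the twisted action $g\cdot x = gx\tau(g^t)$. Writing $a_\lambda m_\mu = \sum_{\nu} d(\lambda,\mu;\nu)\, m_\nu$ with $d(\lambda,\mu;\nu)$ counting points of $K_0\pi^\lambda K_0/K_0$ sending $x_\mu$ into $K_0\cdot x_\nu$, the crucial point is that the relevant ``symmetrization'' sends $\lambda\mapsto 2\lambda$: if $g=\pi^\lambda k$ then $g\cdot x_\mu = \pi^\lambda k\, x_\mu\, \tau(k^t)\tau(\pi^\lambda) = \pi^\lambda\big(k x_\mu \tau(k^t)\big)\pi^{\tau(\lambda)}$, and since $E/F$ is unramified the uniformizer $\pi$ can be chosen $\tau$-fixed, so $\tau(\pi^\lambda)=\pi^\lambda$ and the exponent doubles. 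Hence one expects the support bound $\nu\leq 2\lambda+\mu$ with top term $m_{2\lambda+\mu}$, and $q(\lambda,\mu)=d(\lambda,\mu;2\lambda+\mu)$. The support inequality again comes from elementary divisors applied to $gx_\mu\tau(g^t)$, and nonvanishing of the top coefficient from exhibiting a single explicit $k$ (again unipotent with suitably chosen units) for which $\pi^\lambda k x_\mu \tau(k^t)\pi^\lambda$ has invariant factors exactly $2\lambda+\mu$, together with a residue-field count showing the corresponding cell is a nonempty variety over $\bF_{q_E}$ (here the residual characteristic being $\neq 2$ guarantees the Hermitian-form normalizations behave, e.g. that $1$ is a norm / that the relevant quadratic twists are nondegenerate).

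The main obstacle I anticipate is \emph{not} the support inequalities — those are classical — but pinning down precisely that the top structure constants $p(\lambda,\mu)$ and $q(\lambda,\mu)$ are \emph{nonzero}. One has to make sure the ``generic'' representative one writes down actually has invariant factors equal to $\lambda+\mu$ (resp.\ $2\lambda+\mu$) and not something strictly smaller due to an unforeseen cancellation, and then that the fiber of the map $K_0\pi^\lambda K_0/K_0\to\{\text{double cosets}\}$ over the top stratum is genuinely nonempty after reduction mod $\pi$. A clean way to organize this is to work directly over the residue field: reduce the lattice picture modulo $\pi$ and interpret the top coefficient as counting points on an explicit locally closed subvariety of a product of affine spaces / flag-type varieties, cut out by the non-cancellation (open, generically satisfied) conditions; nonemptiness over the finite field $\bF_{q_E}$ then follows from it being a nonempty Zariski-open subset of an affine space (or more carefully, one exhibits an explicit $\bF_{q_E}$-point). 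In the Hermitian case (part (2)) one additionally invokes that $q_E$ is odd so that the relevant ``is this a norm / is this form split'' obstruction vanishes or is controlled, which is exactly where the hypothesis on the residual characteristic enters; I would isolate this as a small lemma about unimodular Hermitian forms over $\cO_E$ when $q_E$ is odd (existence of a suitable basis change realizing the target diagonal form), and then the whole Key Proposition follows by assembling these pieces.
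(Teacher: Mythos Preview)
Your overall shape is right --- support bound plus nonvanishing of the top coefficient --- and for Part~(1) the paper simply cites the classical Hecke-algebra fact (Macdonald). But you have the difficulty exactly backwards.

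The nonvanishing of the top coefficients is \emph{trivial}, not the main obstacle. Take $k=1\in K_0$: then $\pi^\lambda\cdot\pi^\mu=\pi^{\lambda+\mu}$ gives $p(\lambda,\mu)\neq 0$, and $\pi^\lambda\cdot x_\mu=\pi^\lambda x_\mu\,\tau((\pi^\lambda)^t)=\pi^\lambda\pi^\mu\pi^\lambda=x_{2\lambda+\mu}$ (using that $\pi$ may be chosen $\tau$-fixed since $E/F$ is unramified, and that $\pi^\lambda$ is diagonal) gives $q(\lambda,\mu)\neq 0$. The paper dispatches this in one line: ``since $x_{2\lambda+\mu}\in\pi^\lambda K_0 x_\mu$''. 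No generic unipotents, no Schubert cells, no residue-field counts are needed; the residual-characteristic hypothesis enters only through Jacobowitz's classification of Hermitian lattices, which underlies the orbit parametrization $K_0\backslash X\simeq\Lambda^{++}$, not through any norm obstruction at the top stratum.

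The actual content is the support inclusion $\pi^\lambda K_0 x_\mu\subset\bigcup_{\nu\le 2\lambda+\mu}K_0 x_\nu$ in Part~(2), and here the paper takes a different route from your elementary-divisors sketch. It first establishes an intrinsic Grassmannian characterization of the orbit invariants: for $x\in K_0 x_\nu$ the partial sums $\nu_n+\cdots+\nu_{n-k+1}$ are recovered as $\min_{W}\nu_{O_E^n\cap W}(x|_W)$ over $k$-planes $W$, and analogously the invariants of $K_0\pi^\lambda K_0$ via the lattice indices $[O_E^n\cap W:\pi^\lambda O_E^n\cap W]$. The bound $\nu\le 2\lambda+\mu$ then falls out of the pointwise identity
\[
\nu_{O_E^n\cap W}\bigl((\pi^\lambda\cdot x)|_W\bigr)=2\,[O_E^n\cap W:\pi^\lambda O_E^n\cap W]+\nu_{O_E^n\cap W}(x|_W)
\]
after minimizing over $W$ and bounding the two summands separately. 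Your proposed approach --- bound the $GL_n$-invariant factors of the triple product $g\,x_\mu\,\tau(g^t)$ by $\lambda+\mu+\lambda=2\lambda+\mu$ using the Part~(1) inequality twice, and then observe that a Hermitian matrix lying in $K_0\pi^\nu K_0$ necessarily lies in $K_0\cdot x_\nu$ --- also works and is arguably more streamlined, but you should state that last observation explicitly (it is exactly Jacobowitz plus uniqueness of the Cartan decomposition), since it is the step where your sketch is currently silent.
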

\RamiF{Part (1) is well known (see e.g. \cite[Chapter 5 (2.6)]{Mac}). We postpone the proof of Part (2) to} \S \ref{subsection: proof of prop} and continue with the proof of Theorem \ref{thm:Hir}

\begin{proof}[Proof of Theorem  \ref{thm:Hir}]
\RamiD{For $\lambda \in \Z^{n-1}$ denote $\tilde{F}_\lambda(A)=F_{\leq \tau(\lambda)}(A)$, $\tilde{\EitanC{G}}_\lambda(M)=\EitanC{G}_{\leq \tau(\lambda)}(M)$, where $$\tau((\lambda_1,\dots,\lambda_{n-1}))=(\lambda_1,\lambda_2-\lambda_1,\dots,\lambda_{n-1}-\lambda_{n-2},-\lambda_{n-1}).$$  Let $\phi:\Z^{n-1}\to \Z^{n-1}$ be given by $\phi(\lambda)=2\lambda$. \EitanC{P}roposition \ref{prop: maincomp} implies that  $\tilde F$ gives a structure of $\Z^n$-filtered algebra on $A$ and $\phi$-filtered module on $M$.}

Applying \RamiD{Proposition} \ref{grfree} it is enough to show that $Gr_{\EitanC{G}}(M)$ is finitely generated free $Gr_{F}(A)$-module. We now let $\bar{a}_{\lambda},\bar{m}_{\lambda}$ be the reductions of $a_{\lambda},m_{\lambda}$ to the associated graded. By proposition \ref{prop: maincomp} we get $\bar{a}_{\lambda}\bar{a}_{\mu}=p(\lambda,\mu)\bar{a}_{\lambda+\mu}$ and $\bar{a}_{\lambda}\bar{m}_{\mu}=q(\lambda,\mu)\bar{m}_{2\lambda+\mu}.$ 
Let $L \subset \Lambda^{++}$ be a such that $\Lambda^{++}=\cup_{\ell \in L}(\ell+2\Lambda^{+_{}+})$ is a disjoint covering.
Clearly, the set $\{m_{\ell}| \ell \in L\}$ is a free basis of $Gr_{\EitanC{G}}(M)$\ over $Gr_{F}(A)$. This finishes the proof.
\end{proof}
\section{Proof of Key Proposition \ref{prop: maincomp}}\label{subsection: proof of prop}
The proof of the proposition require an explicit version of Lemma \ref{lemma: representatives}. For this we require a definition.

\begin{definition}
Let $V=E^n$ and $V_0=F^n$
\begin{enumerate}
\item If $L_{1},L_{2}$ are two $O_E$-lattices in $V$ then we define
$$[L_{1}:L_{2}]=\log_{q_E}(|L_{1}/(L_{1} \cap L_{2})||L_{2}/(L_{1} \cap
L_{1})|^{-1})$$
\item
 Let $Q$ be a Hermitian form on $V.$ Let $L \subset V_0$ be a lattice.
  Take an $O_{F}$ basis $B=\{v_{1},\dots,v_{n}\}$ to $L.$ We define
$$\nu_{L}(Q)=\nu(det(Gram(B))):=\nu(det(Q(v_{i},v_{j}))),$$ \RamiB{where $\nu$ is the valuation of $E$.} This is
independent of the choice of the basis.


\end{enumerate}

\end{definition}


\begin{lemma}\label{lemma: explicitrepresentatives}
Let $\lambda=(\lambda_{1},\dots,\lambda_{n}) \in \Lambda^{++}$ and
denote by $p_k=\lambda_1+ \lambda_2+ \dots +\lambda_k$
and let $q_k=\lambda_n+ \lambda_{n-1}+ \dots +\lambda_{n-k+1}.$
\begin{enumerate}


\item Let $g \in K_{0} \pi^{\lambda}K_{0}.$ Then $p_k=\min \limits_{W \in Grass(k,V)} [W \cap O_{E}^{n} : W \cap gO_{E}^{n}].$

\item Let $x\in K_{0}x_{\lambda}.$
Then $q_k=\min \limits_{W \in Grass(k,V_{})} \nu_{O_{E}^{n} \cap
W}(x|_{W}).$


\end{enumerate}
\end{lemma}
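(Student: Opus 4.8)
The plan is to prove Lemma~\ref{lemma: explicitrepresentatives} by reducing both statements to the diagonal representative $\pi^\lambda$ (resp. $x_\lambda$) and then exhibiting the extremal subspace $W$ explicitly. For part~(1): fix $g = k_1 \pi^\lambda k_2$ with $k_i \in K_0$. Since $k_2$ preserves $O_E^n$, we have $gO_E^n = k_1 \pi^\lambda O_E^n$, so the quantity $[W \cap O_E^n : W \cap gO_E^n]$ depends only on the lattice $k_1 \pi^\lambda O_E^n$; replacing $W$ by $k_1^{-1}W$ (which permutes $Grass(k,V)$ and preserves $W \cap O_E^n$ since $k_1 \in K_0$) reduces us to $g = \pi^\lambda$, i.e. to the diagonal lattice $L_\lambda = \bigoplus_i \pi^{\lambda_i} O_E$. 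The first thing I would establish is the elementary divisor inequality: for any $k$-dimensional $W$, the relative position $[W \cap O_E^n : W \cap L_\lambda]$ is bounded below by the sum of the $k$ smallest ``shifts,'' with equality realized by the coordinate subspace spanned by the $k$ standard basis vectors $e_{i}$ with the largest $\lambda_i$. Because $\lambda \in \Lambda^{++}$ means $\lambda_1 \geq \lambda_2 \geq \cdots \geq \lambda_n$, those are $e_1,\dots,e_k$, and the minimum value is exactly $p_k = \lambda_1 + \cdots + \lambda_k$. The lower bound itself is the standard fact that for a single lattice inclusion the ``$k$-th exterior'' relative position is superadditive; concretely one passes to $\Lambda^k$ of the lattices and uses that $\det$ on a $k$-subspace intersected with $O_E^n$ picks up at least $p_k$ in valuation (this is essentially the theory of elementary divisors / Smith normal form over the DVR $O_E$).

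For part~(2): fix $x = k \cdot x_\lambda = k \pi^\lambda \tau(k^t)$ with $k \in K_0$, viewing $x_\lambda$ as the Hermitian form whose Gram matrix on the standard basis is $\operatorname{diag}(\pi^{\lambda_1},\dots,\pi^{\lambda_n})$. The key observation is that $\nu_{O_E^n \cap W}(x|_W)$ transforms under the $K_0$-action by replacing $W$ with $k^{-1}W$: indeed if $B$ is an $O_F$-basis of $O_E^n \cap W$ then $kB$ need not lie in $O_E^n \cap (kW)$ in general, so I would instead argue directly that $\det\bigl(Gram_{x}(B)\bigr) = \det\bigl(Gram_{x_\lambda}(k^{-1}B)\bigr)$ up to a unit, and that $W \mapsto k^{-1}W$ is a bijection on $Grass(k,V)$ carrying $O_E^n \cap W$-bases to $O_E^n \cap (k^{-1}W)$-bases (using $k \in K_0$ preserves $O_E^n$; one has to be slightly careful that we work with $O_F$-bases of the $F$-points, but $\tau(k^t)$ also has entries in $O_E$, so the Gram matrix of an integral basis stays integral and its determinant valuation is a $K_0$-invariant of $W$). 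This reduces to $x = x_\lambda$ diagonal. Then for a coordinate subspace $W = \operatorname{span}(e_{i_1},\dots,e_{i_k})$ one computes $\nu_{O_E^n \cap W}(x_\lambda|_W) = \lambda_{i_1} + \cdots + \lambda_{i_k}$ directly from the diagonal Gram matrix, and minimizing over coordinate subspaces gives the $k$ smallest $\lambda_i$, namely (again by $\lambda \in \Lambda^{++}$) $\lambda_n + \lambda_{n-1} + \cdots + \lambda_{n-k+1} = q_k$. The remaining content is the lower bound $\nu_{O_E^n \cap W}(x_\lambda|_W) \geq q_k$ for \emph{arbitrary} $W$, not just coordinate ones: this follows by the same exterior-power / elementary-divisor argument as in part~(1), applied to the Gram pairing, together with the Hermitian symmetry ensuring the relevant determinant lies in $F$ so its valuation is an integer.

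\textbf{Main obstacle.} I expect the crux to be the universal lower bound in part~(2) — that \emph{no} $k$-plane $W$ can do better than the best coordinate $k$-plane. Unlike part~(1), where the two lattices $O_E^n$ and $gO_E^n$ can be simultaneously diagonalized (Smith normal form), here one is comparing a \emph{lattice} with a \emph{Hermitian form}, and the naive argument ``diagonalize the form on $W \cap O_E^n$'' does not immediately interact with the ambient integral structure. The clean way around this is to package the statement as: the function $W \mapsto \nu_{O_E^n\cap W}(x_\lambda|_W)$ equals the minimal valuation of a $k\times k$ minor of $\operatorname{diag}(\pi^{\lambda_i})$ sitting inside the $O_E$-span, which by the Cauchy–Binet expansion is a sum of $k$ distinct $\lambda_i$'s with nonnegative coefficients summing appropriately — so the minimum is $q_k$. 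I would spend most of the write-up making this Cauchy–Binet / exterior-power bookkeeping precise over the DVR $O_E$ and checking the valuation lands in $\Z$ (not $\tfrac12\Z$) using that $E/F$ is unramified and the form is Hermitian. The $K_0$-invariance reductions at the start of each part are routine once one is careful that all the matrices in sight ($k$, $\tau(k^t)$) have integral entries and unit determinant.
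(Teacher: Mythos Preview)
For part (1) your plan coincides with the paper's: reduce to $g=\pi^\lambda$ by $K_0$-equivariance, then bound the index from below via a minor argument (the paper picks a unit $k\times k$ minor of the basis matrix and tracks a change-of-basis determinant). But there is a genuine inconsistency in your sketch. You (correctly) state that the elementary-divisor lower bound is ``the sum of the $k$ smallest shifts,'' yet you then declare the minimizing coordinate plane to be $\mathrm{span}(e_1,\dots,e_k)$ with value $p_k=\lambda_1+\cdots+\lambda_k$, the sum of the $k$ \emph{largest}. These cannot both hold: the coordinate plane $\mathrm{span}(e_{n-k+1},\dots,e_n)$ already gives index $q_k<p_k$, so no argument of this shape can yield $p_k$ as a lower bound for the minimum. (The paper's own last inequality ``$\sum_j\lambda_{i_j}\ge p_k$'' runs into the same wall --- $p_k$ is the \emph{maximum} of $\sum_{j\in I}\lambda_j$ over $k$-subsets --- so the statement with $p_k$ appears to be a misprint for $q_k$. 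Either way, the self-contradiction in your outline should have caught your eye.)

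For part (2) your Cauchy--Binet approach is genuinely different from the paper's and handles the obstacle you flagged more cleanly. The paper instead invokes Lemma~\ref{lemma: representatives} (Jacobowitz) to choose $x_\lambda$-orthogonal $O_E$-bases of $O_E^n\cap W$ and of $O_E^n\cap W^\perp$ separately, and then asserts that the combined diagonal values $(\mu_1,\dots,\mu_n)$ are a permutation of $(\lambda_1,\dots,\lambda_n)$; this step tacitly uses $(O_E^n\cap W)\oplus(O_E^n\cap W^\perp)=O_E^n$, which is not automatic. Your route avoids any such splitting: with $A\in M_{n\times k}(O_E)$ the matrix of an $O_E$-basis of $O_E^n\cap W$, Cauchy--Binet gives
\[
\det\bigl(\tau(A)^t\,\mathrm{diag}(\pi^{\lambda_i})\,A\bigr)\;=\;\sum_{|I|=k} N_{E/F}(\det A_I)\cdot\pi^{\sum_{i\in I}\lambda_i},
\]
and each term has valuation $2\nu(\det A_I)+\sum_{i\in I}\lambda_i\ge 0+q_k$, so $\nu_{O_E^n\cap W}(x_\lambda|_W)\ge q_k$, with equality at $W=\mathrm{span}(e_{n-k+1},\dots,e_n)$. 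This is more elementary than the paper's argument and does not lean on the structure theory of Hermitian lattices.
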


\begin{proof}
\begin{enumerate}
\item We first note

$$\min \limits_{W \in Grass(k,V)} [W \cap O_{E}^{n} : W \cap gO_{E}^{n}]=\min \limits_{W \in Grass(k,V)} [W \cap O_{E}^{n} : W \cap \pi^{\lambda}O_{E}^{n}]$$

It remains to verify the statement of the lemma for $g=\pi^{\lambda}.$
Clearly,

 $$p_{k} \geq \min \limits_{W \in Grass(k,V)} [W \cap O_{E}^{n} : W \cap \pi^{\lambda}O_{E}^{n}]$$

Thus it is enough to show that for any $W \in Grass(\RamiB{k},V)$ we have
$$p_{k} \leq  [W \cap O_{E}^{n} : W \cap \pi^{\lambda}O_{E}^{n}]$$

For this we let $e_1,..,e_k$ be an $O_E$ basis for $W \cap O_E^{n}.$ Let $A \in Mat_{n \times k}(O_E)$ be the matrix whose $i$-the column is $e_i$, $i=1,..,k.$

Denote by $r(A)$ the matrix obtained from $A$ by reducing its elements to $O/\pi.$ Since $e_1,...,e_k$ is a basis we have $rank(r(A)) \geq k$ and we can find a $k \times k$ minor which is invertible in $O_E.$
Explicitly, we have  $\RamiB{\mathcal{I}}=(i_1,i_2,..,i_k)$ such that the minor $M_{\RamiB{\mathcal{I}},[1,k]}(A) \in O^{\times}.$

Notice that
\begin{multline*}[W \cap O_{E}^{n} : W \cap \pi^{\lambda}O_{E}^{n}]=[Span_{O_{E}}(e_1,..,e_k):
\pi^{\lambda}(\pi^{-\lambda}W \cap O_{E}^{n})]=\\=[Span_{O_{E}}(\pi^{-\lambda}e_1,..,\pi^{-\lambda}e_k):\pi^{-\lambda}W \cap O_{E}^{n}]=\\=[Span_{O_E}(\pi^{-\lambda}e_1,..,\pi^{-\lambda}e_k):Span_{E}(\pi^{-\lambda}e_1,..,\pi^{-\lambda}e_k) \cap O_{E}^{n}]
\end{multline*}
Let $f_1,...,f_k$ be an $O_E$-basis for $Span_{E}(\pi^{-\lambda}e_1,..,\pi^{-\lambda}e_k) \cap O_{E}^{n}.$ Let $B \in Mat_{n \times k}(O_E)$ be the corresponding matrix as before.


Let $C \in Mat_{k \times k}(E)$ be such that $B=\pi^{-\lambda}AC.$
Passing to the sub-matrix $B_{\RamiB{\mathcal{I}},[1,..,k]}$ we have $B_{\RamiB{\mathcal{I}},[1,..,k]}=diag(\pi^{-\lambda_{i_1}},...,\pi^{-\lambda_{i_k}})A_{\RamiB{\mathcal{I}},[1,..,k]}C.$
Thus $M_{\RamiB{\mathcal{I}},[1,k]}(B)=\pi^{-\sum_{j=1}^k\lambda_{i_j}}M_{\RamiB{\mathcal{I}},[1,k]}(A)det(C).$ Thus
$$0\leq\nu(M_{\RamiB{\mathcal{I}},[1,k]}(B))=-\sum_{j=1}^k\lambda_{i_j}+\nu(M_{\RamiB{\mathcal{I}},[1,k]}(A))+\nu(det(C))=-\sum_{j=1}^k\lambda_{i_j}+\nu(det(C))$$

Finally,
 \begin{multline*}[W \cap O_{E}^{n} : W \cap \pi^{\lambda}O_{E}^{n}]=[Span_{O_{E}}(\pi^{-\lambda}e_1,...,\pi^{-\lambda}e_k):Span_{O_E}(f_1,...,f_k)]=\nu(det(C))\geq\\ 
\geq \sum_{j=1}^k\lambda_{i_j} \geq p_k
 \end{multline*}


\item as before, the only non-trivial part is to show that
 $$\nu_{O_{E}^{n} \cap
W}(x_\lambda|_{W}) \geq q_k.$$
If $x_\lambda|_{W}$ is degenerate this is obvious.  So we will assume it is not. By Lemma \ref{lemma: representatives} we can find a $x_\lambda|_{W}$-orthonormal basis $(e_1, \dots, e_k)$ of ${O_{E}^{n} \cap
W} $ and a $x_\lambda|_{W^\bot}$-orthonormal basis $(e_{k+1}, \dots, e_n)$ of ${O_{E}^{n} \cap
{W^\bot}}$. Let $\mu_i=\tau(e_i^t)x_\lambda e_i$. By Lemma \ref{lemma: representatives} the collection $(\mu_1,\dots  ,\mu_n)$  coincides (up to reordering)  with $(\lambda_1,\dots  ,\lambda_n)$ thus $$\nu_{O_{E}^{n} \cap
W}(x_\lambda|_{W})=\mu_1 +\cdots+\mu_k \geq \lambda_n +\cdots+\lambda_{n-k+1}=q_k$$
\end{enumerate}

\end{proof}

\begin{proof}[Proof of Proposition \ref{prop: maincomp} (2)]
Since $x_{2 \lambda+\mu} \in \pi^{\lambda}K_{0}x_{\mu}$, it is enough to show that
$\pi^{\lambda}K_{0}x_{\mu} \subset \bigcup_{\nu \leq 2 \lambda+\mu} K_{0}x_{\nu}.$ Let $x \in K_{0}x_{\mu}.$

By Lemma \ref{lemma: explicitrepresentatives}(2)  we have to show $$\min \limits_{W \in
Grass(i,V)} \nu_{W \cap O^{n}}(\pi^{\lambda} \cdot x|_{W})
\leq \sum_{j=n-i+1}^{n} (\mu_{j}+2 \lambda_{j}).$$ By Lemma  \ref{lemma: explicitrepresentatives} we have,

\begin{multline*}\min \limits_{W \in Grass(i,V)} \nu_{O^{n} \cap
W}(\pi^{\lambda} \cdot x|_{W})=\min \limits_{W \in Grass(i,V)}
\nu_{\pi^{\lambda}O^{n} \cap
\pi^{\lambda}W}(x|_{\pi^{\lambda}W})=\\ = \min \limits_{W \in
Grass(i,V)} \nu_{\pi^{\lambda}O^{n} \cap W}(x|_{W})=
\min \limits_{W \in Grass(i,V)} (2[O^{n} \cap
W:\pi^{\lambda}O^{n}\cap W]+\nu_{O^{n} \cap W}(x|_{W})) \leq \\
\leq2 \min \limits_{W \in Grass(i,V)}([O^{n} \cap
W:\pi^{\lambda}O^{n} \cap W])+\sum_{j=n-i+1}^{n} \mu_{j}=\sum_{j=n-i+1}^{n} (2 \lambda_{j}+\mu_{j}).
\end{multline*}

\end{proof}



\vspace{1cm}

\end{document}